\title{\uppercase{H-twisted Courant algebroids}}
\author{\uppercase{Melchior Gr{\"u}tzmann}}
\address{Department of Mathematics, Northwestern Polytechnical University,\\
  Chang'an Campus, Xi'an 710129, Peoples Republic of China,\\
  \email{melchiorG@gMail.com}  }
\date{June 2012}
\newcommand\defbb[2]{\def#1{{\mathbb{#2}}}}
\newcommand\defcal[2]{\def#1{{\mathcal{#2}}}}
\newcommand\deffrak[2]{\def#1{{\mathfrak{#2}}}}
\newcommand\defrm[2]{\def#1{{\mathrm{#2}}}}
\def\:{\colon}
\def\[{\begin{equation}}
\def\]{\end{equation}}
\def\<{\langle}
\def\>{\rangle}
\def\conn_#1{\nabla_{\!\!#1\,}}
\DeclareMathOperator\ad{ad}
\def\cycl{\text{cycl.}}
\defrm\ud{d}
\defrm\uD{D}
\defcal\D{D}
\DeclareMathOperator\Der{Der}
\let\epsilon=\varepsilon
\defcal\E{E}
\newcommand\Esforms[2][\bullet]{\Omega^{#1}_M(E,#2)}
\deffrak\g{g}
\DeclareMathOperator\im{im}
\def\ins{i}
\defbb\k{k}
\defcal\Lie{L}
\defbb\NN{N}
\def\tri{\triangleright}
\def\smooth{{\mathrm{C}^\infty}}
\let\subset=\subseteq
\def\unsh{{\text{unshuffles}}}
\defbb\RR{R}
\defbb\ZZ{Z}
\providecommand{\url}[1]{{\def~{{\textasciitilde}}\texttt{#1}}}
\providecommand\href[2]{\rulebox{\texttt{#2}}}
\providecommand\doi[1]{\href{http://dx.doi.org/#1}{DOI: #1}}
\providecommand{\eprint}[2][]{\href{http://arXiv.org/abs/#2}{#2}}  
\providecommand\Eprint[2][]{\href{http://arXiv.org/abs/#2}{arXiv:#2}}  
\begin{document}
\maketitle
\received{June 2012}
\revised{July 1901}
\bigskip
\begin{abstract}
We generalize Hansen--Strobl's definition of Courant algebroids twisted by a 4-form on the base manifold such that the twist $H$ of the Jacobi identity is a four-form in the kernel of the anchor map and is closed under a naturally occurring exterior covariant derivative.  We give examples and define a cohomology.
\end{abstract}

\keywords{twist of the Courant bracket; Courant algebroid; cohomology of algebroids.}

\section{Introduction}\label{s:intro}
Courant algebroids were introduced by Liu, Weinstein, and Xu in \cite{Xu97} in order to describe the double of a Lie bialgebroid.  They were further investigated by Roytenberg beginning during his Ph.D. studies and a formulation in terms of a Dorfman bracket was discovered \cite{Royt99} as well as the fitting into a two-term $L_\infty$-algebra \cite{Royt98}.  In \cite{Str09} Hansen and Strobl discovered four-form twisted Courant algebroids arising naturally in the Courant sigma model with a Wess--Zumino boundary term.  These $H$-twisted Courant algebroids were further investigated by Liu and Sheng in \cite{SL10b} where the observation was made that exact $H$-twisted Courant algebroids, they fit into a short exact sequence with the tangent and cotangent bundle, always have an exact four-form $H$.  In this paper we want to generalize the notion of $H$-twist and exhibit examples that do not come from an exact or even closed four-form.  The idea is analogous to $H$-twisted Lie algebroids (introduced in \cite{Gru10}) that guided from an exterior covariant derivative (Proposition~\ref{p:D}) that occurs naturally for strongly anchored almost Courant algebroids with anchor $\rho$ on the exterior algebra of sections of $\ker\rho$, one permits the Jacobiator to be a $\ker\rho$-four-form closed under the exterior covariant derivative.  We will give examples of generalized exact four-forms, \textit{i.e.}\ starting from a Courant algebroid with anchor $\rho$ and a $\ker\rho$-three-form with a certain integrability condition we define a Dorfman bracket together with a (nontrivial) $\ker\rho$-four-form $H$ that fit under the above idea.

Since already the definition of the closed generalized four-form requires sections of a possibly singular vector bundle, we also give a definition generalizing Roytenberg's idea of Courant--Dorfman algebras in \cite{Royt09}.

Furthermore, we carry over the idea of Sti{\'e}non and Xu \cite{SX07} to define cochains as a subset of the exterior algebra of the $H$-twisted Courant algebroid such that the naive expression of a differential by the formula that holds for Lie algebroids actually gives a cochain again and squares to 0 in Theorem~\ref{thm:naive}.  We end the treatment with the obvious generalization of Dirac structures to $H$-twisted Courant algebroids and Strobl's as well as Sheng--Liu's idea \cite{SL10b} that such Dirac structures give $H$-twisted Lie algebroids.

In the mean-time parallel developments have shown that it is possible to simplify the definition of H-twisted Courant algebroids, see \cite{LSX12a}.

\medskip
The paper is organized as follows.  In Section~\ref{s:prelim} we give a short summary of the definition of Courant algebroid, two-term $L_\infty$-algebra introduced by Baez and Crans \cite{Baez:03vi} and Roytenberg--Weinstein's observation that together with the skew-symmetric bracket the Courant algebroid gives such a two-term $L_\infty$-algebra.  In Subsection~\ref{s:D} we begin with a definition of strongly anchored almost Courant algebroids and their natural  covariant derivative on the kernel of the anchor map.  We continue with the definition of $H$-twisted Courant algebroids and some examples.  This part ends with the definition of an $H$-twisted Courant--Dorfman algebra.  In Section~\ref{s:naive} we define the naive cohomology of $H$-twisted Courant algebroids.  In the last section we generalize the notion of Dirac structures and give examples of $H$-twisted Lie algebroids.

\section{Preliminaries}\label{s:prelim}
Remember the definition of Courant algebroid.  This goes back to Liu--Weinstein--Xu in \cite{Xu97}.  We take the version of Roytenberg in \cite[2.6]{Royt99}.
\begin{definition}  A Courant algebroid is a vector bundle $E\to M$ together with an $\RR$-bilinear (non-skewsymmetric) bracket $[.,.]\:\Gamma(E)\otimes\Gamma(E)\to\Gamma(E)$, a morphism of vector bundles $\rho\:E\to TM$, and a symmetric non-degenerate bilinearform $\<.,.\>\:E\otimes E\to \RR\times M$ subject to the following axioms
\begin{align}
   [\phi,[\psi_1,\psi_2]] &=[[\phi,\psi_1],\psi_2] +[\psi_1,[\phi,\psi_2]], \\ 
  [\phi,f\cdot\psi] &= \rho(\phi)[f]\cdot\psi +f\cdot[\phi,\psi], \\ 
  [\psi,\psi] &= \tfrac12\rho^*\ud\<\psi,\psi\>, \\ 
  \rho(\phi)\<\psi,\psi\> &= 2\<[\phi,\psi],\psi\>. 
\end{align}  where $\phi,\psi_i\in\Gamma(E)$, $f\in\smooth(M)$, and $\ud$ is the de Rham differential of the smooth manifold $M$.
\end{definition}
In what follows we will identify $E^*$ with $E$ via the symmetric non-degenerate bilinearform $\<.,.\>$.

From \cite{Baez:03vi} we take the following definition of a two-term $L_\infty$-algebra.
\begin{definition}  A two-term $L_\infty$-algebra is a two-term complex $0\to V_1\xrightarrow{\partial} V_0\to 0$ together with three more maps
\begin{align}
  [.,.]\:V_0\wedge V_0&\to V_0, \nonumber\\
  \tri\:V_0\otimes V_1&\to V_1, \nonumber\\
  l_3\: V_0\wedge V_0\wedge V_0 &\to V_1  \nonumber
\intertext{Subject to the rules}
  [\phi,\partial f] &= \partial(\phi\tri f)  \label{n=2}\\
  (\partial f)\tri g + (\partial g)\tri f &= 0  \label{n=2b}\\
  [\phi_1,[\phi_2,\phi_3]\,] +\cycl &= \partial l_3(\phi_1,\phi_2,\phi_3)  \label{n=3}\\
  \phi_1\tri(\phi_2\tri f) -\phi_2\tri(\phi_1\tri f) -[\phi_1,\phi_2]\tri f &= l_3(\phi_1,\phi_2,\partial f)  \label{n=3b}\\
  l_3([\phi_1,\phi_2]\wedge\phi_3\wedge\phi_4) +\phi_1\tri l_3(\phi_2&\wedge\phi_3\wedge\phi_4) +\unsh = 0   \label{n=4}
\end{align} where $\phi_i\in V_0$ and $f\in V_1$.
\end{definition}

As Roytenberg--Weinstein observed, the Courant algebroid gives rise to a two-term $L_\infty$-algebra with the identifications $V_0=\Gamma(E)$, $V_1=\smooth(M)$, $\partial=l_1=\rho^*\circ\ud$, $l_2(\psi_1,\psi_2)=[\psi_1,\psi_2]-\tfrac12\rho^*\ud\<\psi_1,\psi_2\>$, $\psi\tri f=\tfrac12\<\psi,\partial f\>$, and $l_3(\psi_1,\psi_2,\psi_3)=\tfrac16\<[\psi_1,\psi_2],\psi_3\>+\cycl$.

Since in the treatment of $H$-twisted Courant algebroids we will encounter sections of possibly singular vector bundles, we will also introduce the notion of Lie--Rinehart \cite{Rin63} as well as Courant--Dorfman algebras \cite{Royt09}.  For this purpose let $\k$ be a commutative ring (with unit 1) and $R$ a commutative $\k$-algebra.
\begin{definition} A Lie--Rinehart algebra $(R,\E,[.,.],\rho)$ is an $R$-module $\E$ together with a $\k$-Lie algebra structure $[.,.]$ on $\E$ and an $R$-linear representation $\rho\:E\to\Der(R)$ subject to the rules
\begin{align*}
  0&=[\psi_1,[\psi_2,\psi_3]] +\cycl,  \\
  [\psi,f\cdot\phi] &= \rho(\psi)[f]\cdot\phi +f\cdot[\psi,\phi],  \\
  \rho[\phi,\psi] &=[\rho(\phi),\rho(\psi)]_{\Der(R)}.
\end{align*}
\end{definition}

Examples are $\E$ the sections of a Lie algebroid $E\to M$ with $R=\smooth(M)$.

\begin{definition}  Let $\k$ contain $\tfrac12$.  A Courant--Dorfman algebra $(R,\E,\<.,.\>,\rho,[.,.])$ consists of an $R$-module $\E$, a symmetric $R$-bilinearform $\<.,.\>\:\E\otimes_R\E\to R$, a derivation $\partial\:R\to\E$, and a $\k$-bilinear (non-skewsymmetric) bracket $[.,.]\:\E\otimes\E\to\E$ subject to the rules
\begin{align*}
  [\psi,f\cdot\phi] &= \rho(\psi)[f]\cdot\phi +f\cdot[\psi,\phi],  \\
  \<\psi,\partial\<\phi,\phi\>\> &= 2\<[\psi,\phi],\phi\>,  \\
  [\psi,\psi] &= \tfrac12\partial\<\psi,\psi\>, \\
  [\phi,[\psi_1,\psi_2]] &= [[\phi,\psi_1],\psi_2] +[\psi_1,[\phi,\psi_2]],  \\
  [\partial f,\phi] &= 0,  \\
  \<\partial f,\partial g\> &=0
\end{align*} for all $\phi,\psi_i\in\E$, $f,g\in R$.  We call it almost Courant--Dorfman algebra iff only the first three rules hold.
\end{definition}

Examples are $\E$ the sections of a Courant algebroid $E\to M$, $R=\smooth(M)$, $\partial=\rho^*\circ\ud$; but also Lie--Rinehart algebras with trivial pairing $\<.,.\>\equiv0$.

\section{$H$-twisted Courant algebroids}
\subsection{Covariant derivative for strongly anchored almost Courant algebroids}\label{s:D}
\begin{definition}  A \emph{strongly anchored almost Courant algebroid} is a vector bundle $E\to M$ together with a bilinear (non-skewsymmetric) bracket $[.,.]\:\Gamma(E)\otimes\Gamma(E)\to\Gamma(E)$, a symmetric nondegenerate bilinearform $\<.,.\>\:E\otimes E\to \RR\times M$, and a vector bundle morphism $\rho\:E\to TM$, called the anchor subject to the axioms
\begin{align}
  \rho[\phi,\psi] &= [\rho(\phi),\rho(\psi)]_{TM}, \label{rhoMor}\\
  [\phi,f\cdot\psi] &= \rho(\phi)[f]\cdot\psi +f\cdot[\phi,\psi], \label{Leibniz1} \\
  [\psi,\psi] &= \tfrac12\rho^*\ud\<\psi,\psi\>, \label{nSkew1}\\
  \rho(\phi)\<\psi,\psi\> &= 2\<[\phi,\psi],\psi\>. \label{adInv1}
\end{align}
\end{definition}

Given a smooth anchor map $\rho\:E\to TM$ we define the $\Omega^\bullet_M(\ker\rho)$ to be the smooth sections $\Gamma(\wedge^\bullet E)$ that lie in the kernel of $\tilde\rho:\wedge^\bullet E\to TM\otimes \wedge^{\bullet-1}E:\psi_1\wedge\psi_2\mapsto \rho(\psi_1)\otimes\psi_2-\rho(\psi_2)\otimes\psi_1$ and extended correspondingly for more terms.

Following an idea of Sti{\'e}non and Xu \cite{SX07} we define an exterior covariant derivative on these cochains by the formula that holds for Lie algebroids.

\begin{proposition}\label{p:D}  The following is an exterior covariant derivative, \textit{i.e.}\ $\smooth(M)$-linear in the occuring $\psi_i\in\Gamma(M)$.  For $\alpha\in\Omega^p_M(\ker\rho)$ define
\[\begin{split}  \<\D\alpha,\psi_0\wedge\dots\psi_p\> =& \sum_{i=0}^p (-1)^i\rho(\psi_i)\<\alpha,\psi_0\wedge\dots\hat\psi_i\dots\psi_p\> \\
  &+\sum_{i<j} (-1)^{i+j}\<\alpha,[\psi_i,\psi_j]\wedge\psi_0\dots\hat\psi_i\dots\hat\psi_j\dots\psi_p\>
\end{split}  \label{D}
\]  $\D$ maps $\Omega^p(\ker\rho)\to\Omega^{p+1}(\ker\rho)$ and fulfills the Leibniz rule
\[  \D(\alpha\wedge\beta) = (\D\alpha)\wedge\beta +(-1)^{|\alpha|}\alpha\wedge\D\beta \;.  \label{Dleibn}
\]
\end{proposition}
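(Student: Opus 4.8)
The plan is to establish the three assertions in turn: that the right-hand side of \eqref{D} is well defined (antisymmetric and $\smooth(M)$-linear in each $\psi_i$, so that $\D\alpha$ is a genuine section of $\wedge^{p+1}E\cong(\wedge^{p+1}E)^*$), that $\D$ preserves the kernel condition, and finally the Leibniz rule \eqref{Dleibn}. The one structural fact I would isolate at the outset and use everywhere is that, under $E\cong E^*$, membership $\alpha\in\Omega^p_M(\ker\rho)$ is equivalent to $\ins_{\rho^*\xi}\alpha=0$ for all $\xi\in\Gamma(T^*M)$ (this is $\<\xi,\tilde\rho(\alpha)\>=0$), together with the adjunction $\<\alpha,\rho^*\xi\wedge\eta\>=\<\ins_{\rho^*\xi}\alpha,\eta\>$ for the pairing induced by $\<.,.\>$ on $\wedge^\bullet E$. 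I would also record two consequences of the axioms used repeatedly: first $\rho\circ\rho^*=0$, since polarizing \eqref{nSkew1} and using \eqref{rhoMor} gives $\rho(\rho^*\ud\<\phi,\psi\>)=\rho([\phi,\psi]+[\psi,\phi])=[\rho\phi,\rho\psi]+[\rho\psi,\rho\phi]=0$ while every local function is of the form $\<\phi,\psi\>$ by nondegeneracy; and second $[\rho^*\ud f,\psi]=0$, which I would get by pairing with an arbitrary $\chi$ and combining the polarizations of \eqref{nSkew1} and \eqref{adInv1} with \eqref{rhoMor} to obtain $\<[\rho^*\ud f,\psi],\chi\>=\rho(\chi)[\rho(\psi)[f]]-\rho(\chi)[\rho(\psi)[f]]=0$.

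For well-definedness the anchor sum is harmless: it is manifestly skew and the factor $f$ passes through the $i=0$ term via $\rho(f\psi_0)=f\rho(\psi_0)$. All the delicate terms sit in the bracket sum because the bracket is not skew. For antisymmetry, the only transpositions that are not immediate are those reversing the two entries of a bracket, where $[\psi_j,\psi_i]=-[\psi_i,\psi_j]+\rho^*\ud\<\psi_i,\psi_j\>$ by the polarization of \eqref{nSkew1}; the correction contributes $\<\alpha,\rho^*\ud\<\psi_i,\psi_j\>\wedge\cdots\>=\<\ins_{\rho^*\ud\<\psi_i,\psi_j\>}\alpha,\cdots\>$, which vanishes since $\alpha\in\Omega^p_M(\ker\rho)$. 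For $\smooth(M)$-linearity I would substitute $f\psi_0$: Leibniz \eqref{Leibniz1} in the anchor sum produces the derivative terms $(-1)^i\rho(\psi_i)[f]\<\alpha,\psi_0\wedge\cdots\hat\psi_i\cdots\>$, and the identity $[f\psi_0,\psi_j]=f[\psi_0,\psi_j]-\rho(\psi_j)[f]\psi_0+\<\psi_0,\psi_j\>\rho^*\ud f$ (derived from \eqref{Leibniz1} and \eqref{nSkew1}) feeds the bracket sum exactly the opposite contributions, so the two sets of derivative terms cancel; the leftover $\sum_j(-1)^j\<\psi_0,\psi_j\>\<\alpha,\rho^*\ud f\wedge\cdots\>$ again dies by the kernel condition. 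Thus one mechanism, the trivial contraction of $\alpha$ with $\rho^*$ of an exact form, secures both skewness and tensoriality.

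With both sides now tensorial, I would prove $\D\alpha\in\Omega^{p+1}_M(\ker\rho)$ by checking $\<\D\alpha,\rho^*\ud f\wedge\psi_1\wedge\cdots\wedge\psi_p\>=0$, equivalently $\tilde\rho(\D\alpha)=0$. Setting $\psi_0=\rho^*\ud f$ in \eqref{D}: in the anchor sum the $i=0$ term vanishes because $\rho(\rho^*\ud f)=0$, and each $i\ge1$ term vanishes because the inner pairing $\<\alpha,\rho^*\ud f\wedge\cdots\>$ does. In the bracket sum the pairs $(0,j)$ carry the factor $[\rho^*\ud f,\psi_j]=0$, while every pair $(i,j)$ with $1\le i<j$ still has $\rho^*\ud f$ as a free wedge entry and so is annihilated by $\ins_{\rho^*\ud f}\alpha=0$. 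Hence every term vanishes.

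Finally, the Leibniz rule \eqref{Dleibn} I expect to be routine: first, $\Omega^\bullet_M(\ker\rho)$ is closed under $\wedge$ because $\tilde\rho$ is a derivation of the wedge product, so $\tilde\rho(\alpha\wedge\beta)=0$ whenever $\tilde\rho\alpha=\tilde\rho\beta=0$; then I would pair $\D(\alpha\wedge\beta)$ against $\psi_0\wedge\cdots\wedge\psi_{p+q}$ and expand the induced pairing of $\alpha\wedge\beta$ as the signed sum over $(p,q)$-shuffles of products $\<\alpha,\cdot\>\<\beta,\cdot\>$. The anchor sum then produces the two derivation terms because each $\rho(\psi_i)$ differentiates the product of scalars, and in the bracket sum the contributions in which a bracket crosses the two shuffle blocks cancel in pairs, exactly as in the Lie-algebroid computation behind \cite{SX07}; this step uses nothing beyond the tensoriality already shown. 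I expect the genuine obstacle to be the second paragraph: bookkeeping the signs and the non-skew correction $\rho^*\ud\<\cdot,\cdot\>$ in the bracket sum, and recognising that the single fact that $\alpha$ contracts trivially with $\rho^*$ of an exact form simultaneously rescues antisymmetry, $\smooth(M)$-linearity, and stability of the kernel.
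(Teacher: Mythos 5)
Your proof is correct and rests on exactly the observation the paper's own (much terser) proof singles out: the non-skewsymmetric part of the bracket, $\rho^*\ud\<\psi_i,\psi_j\>$, is annihilated when inserted into $\alpha\in\Omega^p_M(\ker\rho)$, and this one fact drives antisymmetry, $\smooth(M)$-linearity, and stability of the kernel condition. You simply carry out in full the ``straightforward calculation'' the paper omits (including the useful auxiliary identities $\rho\circ\rho^*=0$ and $[\rho^*\ud f,\psi]=0$), so the approach is essentially the same.
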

\begin{proof}  The main difference to Lie algebroids is that the bracket is not skewsymmetric.  However the non-skewsymmetric part of the bracket vanishes when inserted into $\alpha$.  The rest is now a straightforward calculation.  For the last statement note that $\D$ is a first order odd differential operator.
\end{proof}

Note that it is also possible to split a $\ker\rho$-$p+k$-form $\alpha$ as a $\ker\rho$-$p$-form with values in the $k$-fold exterior power of $\ker\rho$.  We will denote any possible splitting as $\tilde\alpha$.

\subsection{Definition and examples}\label{s:defn}
\begin{definition}  An $H$-twisted Courant algebroid is a vector bundle $E\to M$ together with an $\RR$-bilinear (non-skewsymmetric) bracket $[.,.]\:\Gamma(E)\otimes\Gamma(E)\to\Gamma(E)$, a morphism of vector bundles $\rho\:E\to TM$, a symmetric non-degenerate bilinearform $\<.,.\>\:E\otimes E\to \RR\times M$, and a $\ker\rho$-four-form $H\in\Omega^4_M(\ker\rho)$ subject to the following axioms
\begin{align}
   \tilde{H}(\phi,\psi_1,\psi_2) &= [\phi,[\psi_1,\psi_2]]-[[\phi,\psi_1],\psi_2]-[\psi_1,[\phi,\psi_2]], \label{HJacobi} \\
  \D H &= 0,  \label{DH}\\
  [\phi,f\cdot\psi] &= \rho(\phi)[f]\cdot\psi +f\cdot[\phi,\psi], \label{Leibniz} \\
  [\psi,\psi] &= \tfrac12\D\<\psi,\psi\>, \label{nSkew}\\
  \rho(\phi)\<\psi,\psi\> &= 2\<[\phi,\psi],\psi\>. \label{adInv}
\end{align}  where $\phi,\psi_i\in\Gamma(E)$, $f\in\smooth(M)$, and $\D$ is the covariant derivative defined in the previous subsection.
\end{definition}

\begin{lemma}  $\rho$ is a morphism of brackets, \textit{i.e.}
\[  \rho[\phi,\psi] = [\rho(\phi),\rho(\psi)] \;.
\]
\end{lemma}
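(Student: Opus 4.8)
The plan is to show that the two vector fields $\rho[\phi,\psi]$ and $[\rho(\phi),\rho(\psi)]$ agree as derivations of $\smooth(M)$, i.e.\ that they act identically on every $f\in\smooth(M)$. The standard device for a non-skewsymmetric bracket is to evaluate the twisted Jacobi identity~\eqref{HJacobi} on an expression with a function inserted, namely on $[\phi,[\psi,f\cdot\chi]]$ for an auxiliary section $\chi\in\Gamma(E)$, and to expand it in two different ways by repeated use of the Leibniz rule~\eqref{Leibniz}.

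First I would rewrite \eqref{HJacobi} as $[\phi,[\psi,f\cdot\chi]] = [[\phi,\psi],f\cdot\chi] + [\psi,[\phi,f\cdot\chi]] + \tilde H(\phi,\psi,f\cdot\chi)$ and expand each bracket of a product via~\eqref{Leibniz}. On the left this produces the four terms $\rho(\phi)[\rho(\psi)[f]]\cdot\chi$, $\rho(\psi)[f]\cdot[\phi,\chi]$, $\rho(\phi)[f]\cdot[\psi,\chi]$, and $f\cdot[\phi,[\psi,\chi]]$. On the right the analogous expansion yields $\rho([\phi,\psi])[f]\cdot\chi$, $f\cdot[[\phi,\psi],\chi]$, the second-order terms $\rho(\psi)[\rho(\phi)[f]]\cdot\chi$, $\rho(\phi)[f]\cdot[\psi,\chi]$, $\rho(\psi)[f]\cdot[\phi,\chi]$, $f\cdot[\psi,[\phi,\chi]]$, together with the twist $\tilde H(\phi,\psi,f\cdot\chi)$.

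The key observation is that $H$ is a $\ker\rho$-form and is therefore $\smooth(M)$-linear in each argument, so $\tilde H(\phi,\psi,f\cdot\chi)=f\cdot\tilde H(\phi,\psi,\chi)$. A second application of~\eqref{HJacobi} gives $f\cdot[\phi,[\psi,\chi]] = f\cdot[[\phi,\psi],\chi] + f\cdot[\psi,[\phi,\chi]] + f\cdot\tilde H(\phi,\psi,\chi)$, which cancels exactly the iterated-bracket terms and the twist term on the right. After also discarding the common terms $\rho(\phi)[f]\cdot[\psi,\chi]$ and $\rho(\psi)[f]\cdot[\phi,\chi]$, what remains is $\rho(\phi)[\rho(\psi)[f]]\cdot\chi = \rho([\phi,\psi])[f]\cdot\chi + \rho(\psi)[\rho(\phi)[f]]\cdot\chi$, that is, $\bigl([\rho(\phi),\rho(\psi)][f]-\rho([\phi,\psi])[f]\bigr)\cdot\chi = 0$.

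Since this holds for the arbitrary section $\chi$ and $E$ has nonzero fibres, at each point I can choose $\chi$ nonvanishing there, whence $[\rho(\phi),\rho(\psi)][f]=\rho([\phi,\psi])[f]$ for every $f$, which is the claim. The only genuinely delicate point is the bookkeeping of the many Leibniz terms; once that is organized, the essential input is precisely the tensoriality of $\tilde H$, which makes the twist contribution cancel in the same way as the third-order bracket terms, so the twist does not obstruct the anchor from being a morphism of brackets. Note that the pairing axioms~\eqref{nSkew} and~\eqref{adInv} are not needed for this statement.
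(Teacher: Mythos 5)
Your proof is correct and is essentially the paper's own argument: both expand $[\phi,[\psi,f\cdot\chi]]$ (equivalently, start from the second-order derivative term $[\rho(\phi),\rho(\psi)][f]\cdot\chi$) via the Leibniz rule~\eqref{Leibniz}, apply the twisted Jacobi identity~\eqref{HJacobi}, and observe that the $H$-contributions cancel because $\tilde H$ is $\smooth(M)$-linear. Your write-up just carries out the bookkeeping more explicitly than the paper does.
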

\begin{proof}  Start from $[\rho(\phi),\rho(\psi)][f]\cdot\chi]$ for $\phi,\psi,\chi\in\Gamma(E)$, $f\in\smooth(M)$ and expand using the Leibniz rule to iterated brackets. Then use the Jacobi identity \eqref{HJacobi}, and note that the H-contributions cancel, because $H$ is $\smooth(M)$-linear.
\end{proof}

\begin{example}\begin{enumerate}\addtocounter{enumi}{-1}\item  Courant algebroids are exactly the $H$-twisted Courant algebroids where $H=0$.
\item  Analogously to the $H$-twisted Lie algebroids we start with an untwisted Courant algebroid $(E,\<.,.\>,\rho,[.,.]_0)$ and make the general ansatz 
 \[ [\phi,\psi]_B:= [\phi,\psi]_0+\tilde{B}(\phi,\psi)
 \] where $B\in\Omega^3_M(\ker\rho)$.  The Jacobiator of this bracket is
 \[ \tilde{H}:= \widetilde{\D_0 B} +\tilde{B}^2 \]
 where $\tilde{B}^2(\psi_1,\psi_2,\psi_3):=\tilde{B}(\tilde{B}(\psi_1,\psi_2),\psi_3)+\cycl$ and the condition $\D H=0$ reads as
 \[ 0=\D_B H=\D_0\tilde{B}^2+\tilde{B}\D_0B+\tilde{B}^3\;. \]  In the computation we use the fact observed by Sti{\'e}non--Xu that the naive differential $\D_0$ squares to 0.  If we start with a Courant algebroid with $\ker\rho$ of rank at most 4, then every $B\in\Omega^3_M(\ker\rho)$ gives a twisted Courant algebroid.
 
In general, if we can find nontrivial solutions of this nonlinear first order PDE, we can provide nontrivial examples of $H$-twisted Courant algebroids.

\item  One particular case arises when we start with a Courant algebroid $(E,\rho,[.,.],h)$ twisted by a closed 4-form $h\in\Omega^4(M)$ in the sense of Hansen--Strobl \cite{Str09}.  If we pull it back to $\Omega^4_M(\ker\rho)$ via $\rho^*$ we obtain an H-twisted Courant algebroid, because $\im\rho^*\subset\ker\rho$ as well as
\begin{lemma}
\[  \D\circ\rho^* = \rho^*\circ\ud
\]
\end{lemma}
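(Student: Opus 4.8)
The plan is to reduce the identity to the classical Koszul formula for the de~Rham differential by simply unwinding the definition of $\D$ and pushing the pairing through $\rho^*$. Before that I would first check that both sides are even defined: for $\omega\in\Omega^p(M)$ the pullback $\rho^*\omega$ must lie in $\Omega^p_M(\ker\rho)$. Since $\rho^*$ extends factorwise to $\wedge^p T^*M\to\wedge^p E$ and, by the observation $\im\rho^*\subseteq\ker\rho$ noted just above, each factor $\rho^*\xi$ satisfies $\rho(\rho^*\xi)=0$, the image $\rho^*\omega$ is annihilated by $\tilde\rho$. Hence $\rho^*\omega$ is a genuine $\ker\rho$-form and $\D\rho^*\omega\in\Omega^{p+1}_M(\ker\rho)$ makes sense.

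Next I would evaluate both sides against an arbitrary $\psi_0\wedge\dots\wedge\psi_p$ with $\psi_i\in\Gamma(E)$ and appeal to nondegeneracy of $\<.,.\>$ only at the very end. The central input is the pairing formula $\<\rho^*\omega,\psi_0\wedge\dots\wedge\psi_p\>=\omega(\rho\psi_0,\dots,\rho\psi_p)$, which follows from $\<\rho^*\xi,\psi\>=\xi(\rho\psi)$ together with the determinantal extension of $\<.,.\>$ to $\wedge^p E$. Writing $X_i:=\rho(\psi_i)$ and substituting this into the defining formula \eqref{D}, the first sum turns into $\sum_i(-1)^iX_i[\omega(X_0,\dots,\hat X_i,\dots,X_p)]$ and the second sum into $\sum_{i<j}(-1)^{i+j}\omega(\rho[\psi_i,\psi_j],X_0,\dots,\hat X_i,\dots,\hat X_j,\dots,X_p)$.

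At this point I would invoke the preceding lemma, $\rho[\psi_i,\psi_j]=[\rho\psi_i,\rho\psi_j]=[X_i,X_j]$, to rewrite the bracket terms. The two sums taken together are then precisely the Koszul expression for $\ud\omega(X_0,\dots,X_p)$, and by the same pairing formula this equals $\<\rho^*\ud\omega,\psi_0\wedge\dots\wedge\psi_p\>$. Since the identification holds for every choice of $\psi_0,\dots,\psi_p$, nondegeneracy of the pairing yields $\D\rho^*\omega=\rho^*\ud\omega$, as claimed.

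I expect no genuine obstacle here; the statement is essentially a bookkeeping identity once the preceding lemma is available. The two points requiring care are, first, matching the combinatorial normalizations in the factorwise extension of $\rho^*$ and in the extension of $\<.,.\>$ to $\wedge^p E$ so that the coefficients line up exactly with the Koszul formula, and second, confirming that the non-skewsymmetry of $[.,.]$ does no harm. The latter is automatic: only $\rho[\psi_i,\psi_j]$ enters, and the symmetric part of the bracket lies in $\im\rho^*\subseteq\ker\rho$ and is therefore killed by $\rho$, which is exactly what allows the bracket-morphism lemma to be applied cleanly.
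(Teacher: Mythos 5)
Your proof is correct and takes essentially the same route the paper intends: the paper's one-line justification (``follows from the morphism property of the anchor map'') is precisely your key step of replacing $\rho[\psi_i,\psi_j]$ by $[\rho\psi_i,\rho\psi_j]$ so that the defining formula for $\D$ becomes the Koszul formula for $\ud$ after pushing the pairing through $\rho^*$. The remaining points you address (well-definedness via $\im\rho^*\subset\ker\rho$, the pairing identity, and nondegeneracy) are exactly the bookkeeping the paper leaves implicit.
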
  which follows from the morphism property of the anchor map.

\item  Given an H-twisted Lie algebra (an almost Lie algebra $\g$ whose Jacobi identity is twisted by a three-form with values in $\g$ and $\D\g=0$ for the corresponding $\D$), then this augments to an $H$-twisted Courant algebroid over a point iff we can find an $\ad$-invariant symmetric bilinearform $\<.,.\>$ for it and $H$ is then skew-symmetric.
\end{enumerate}
\end{example}

\begin{proposition}  The $H$-twisted Courant algebroid $(E,\rho,[.,.],H)$ is a two-term $L_\infty$-algebra with the identifications $V_0:=\Gamma(E)$, $V_1:=\Gamma(\ker\rho)$, and the operations
\begin{align}
  \partial=l_1\:V_1 &\subset V_0, \\
  l_2\:V_0\wedge V_\bullet&\to V_\bullet: (\psi_1,\psi_2)\mapsto [\psi_1,\psi_2]-\tfrac12\D\<\psi_1,\psi_2\>, \\
  l_3\:\wedge^3V_0 &\to V_1: (\psi_1,\psi_2,\psi_3)\mapsto H(\psi_1,\psi_2,\psi_3)+\tfrac16\D\<[\psi_1,\psi_2],\psi_3\>+\cycl
\end{align}
\end{proposition}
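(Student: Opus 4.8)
The plan is to read the three higher operations $\partial$, $\tri$, and $l_3$ (besides the skew bracket) out of the given data and then to verify \eqref{n=2}--\eqref{n=4} in order of increasing degree. First I would fix the identifications: $\partial=l_1$ is the inclusion $\Gamma(\ker\rho)\hookrightarrow\Gamma(E)$, the skew bracket is $l_2$ restricted to $V_0\wedge V_0$, and the action is $\phi\tri f:=l_2(\phi,f)$ for $f\in V_1$. Two preliminary facts make the codomains correct. Polarizing \eqref{nSkew} gives $[\psi_1,\psi_2]+[\psi_2,\psi_1]=\D\<\psi_1,\psi_2\>$, so $l_2$ is skew-symmetric; and since $\rho$ is a bracket morphism (the lemma above), $\rho[\psi,\psi]=[\rho\psi,\rho\psi]=0$ together with \eqref{nSkew} forces $\rho\circ\rho^*=0$, that is $\im\rho^*\subset\ker\rho$. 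The latter shows that $\D$ of a function lands in $\Gamma(\ker\rho)$, hence that $\phi\tri f$ and the correction terms $\tfrac16\D\<[\psi_i,\psi_j],\psi_k\>$ are genuinely $V_1$-valued; total skew-symmetry of $l_3$ is then clear, the contracted four-form $\tilde{H}(\psi_1,\psi_2,\psi_3)$ being skew and the cyclic sum $\<[\psi_1,\psi_2],\psi_3\>+\cycl$ totally skew by \eqref{adInv} and \eqref{nSkew}.

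With these identifications \eqref{n=2} and \eqref{n=2b} are immediate: \eqref{n=2} is the tautology $l_2(\phi,f)=l_2(\phi,f)$ read across $\partial f=f$, and \eqref{n=2b} is precisely the skew-symmetry of $l_2$ on $V_1\times V_1$.

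The first real computation is \eqref{n=3}. I would expand the Jacobiator $l_2(\phi_1,l_2(\phi_2,\phi_3))+\cycl$ of the skew bracket and, using the polarized \eqref{nSkew}, convert each skew bracket back to the Dorfman bracket. The double-bracket terms then produce the Dorfman Jacobiator $[\phi_1,[\phi_2,\phi_3]]-[[\phi_1,\phi_2],\phi_3]-[\phi_2,[\phi_1,\phi_3]]$, which equals $\tilde{H}(\phi_1,\phi_2,\phi_3)$ by \eqref{HJacobi}, while all the surviving $\D\<\cdot,\cdot\>$ terms reassemble into $\tfrac16\D(\<[\phi_1,\phi_2],\phi_3\>+\cycl)$; together these are exactly $\partial l_3(\phi_1,\phi_2,\phi_3)$. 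Relation \eqref{n=3b} then costs nothing extra: specializing \eqref{n=3} to $\phi_3=\partial f=f\in V_1$ and rewriting the cyclic sum with skew-symmetry turns the left side into $\phi_1\tri(\phi_2\tri f)-\phi_2\tri(\phi_1\tri f)-[\phi_1,\phi_2]\tri f$, and the right side is $l_3(\phi_1,\phi_2,\partial f)$ verbatim.

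The main obstacle is the top coherence \eqref{n=4}, and it is here that \eqref{DH} is indispensable. Since every summand of \eqref{n=4} is linear in $l_3$, I would split $l_3=\tilde{H}+C$ into its four-form part and its $\D\<\cdot,\cdot\>$-part $C$ and track the two contributions to the unshuffle sum separately. The $C$-contribution I would dispatch using only \eqref{D}, the Leibniz rule \eqref{Dleibn}, and the derived identity $[\D g,\phi]=\D\<\phi,\D g\>$ (equivalently $[\D g,\phi]=0$, which follows from \eqref{adInv}, \eqref{nSkew} and the bracket-morphism lemma); this keeps every term at the level of $\D$ on functions and makes the $C$-part cancel exactly as in Sti{\'e}non--Xu's untwisted verification, independently of $H$. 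For the $\tilde{H}$-contribution I would pair the whole identity with an auxiliary section $\psi_0$, turning the contracted $\tilde{H}$ on triples into the honest four-form $H$ on quadruples; the anchor-derivative terms coming from the various $\tri$'s together with the bracket-insertions coming from $l_3([\phi_i,\phi_j]\wedge\cdots)$ then reproduce precisely the two sums in \eqref{D} for $\D H$, so that the $\tilde{H}$-part equals $\<\D H,\psi_0\wedge\phi_1\wedge\dots\wedge\phi_4\>$, which vanishes by \eqref{DH}. I expect the genuine difficulty to be purely combinatorial: matching every unshuffle sign and, in particular, making sure the anchor-derivative terms and the bracket-insertion terms recombine into $\D H$ and not into some other derivative of $H$.
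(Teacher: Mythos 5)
Your plan usefully fleshes out what the paper dismisses as a ``straightforward but lengthy calculation'', and most of it is sound: reducing \eqref{n=2}, \eqref{n=2b} to the facts that $\rho\circ\D|_{\smooth(M)}=0$ and that $l_2(\phi,f)\in\Gamma(\ker\rho)$ for $f\in V_1$, and deducing \eqref{n=3b} from \eqref{n=3} via the injectivity of $\partial$ (here an honest inclusion), are genuine economies. One small repair first: with the paper's non-skew Dorfman bracket the cyclic sum $\<[\psi_1,\psi_2],\psi_3\>+\cycl$ is \emph{not} totally skew. Polarizing \eqref{nSkew} shows that adding to it its image under $\psi_1\leftrightarrow\psi_2$ yields $\rho(\psi_1)\<\psi_2,\psi_3\>+\rho(\psi_2)\<\psi_3,\psi_1\>+\rho(\psi_3)\<\psi_1,\psi_2\>$, which is symmetric but nonzero; so your appeal to \eqref{adInv} and \eqref{nSkew} does not deliver skewness. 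For $l_3$ to be defined on $\wedge^3V_0$ the bracket inside it must be read as the skew bracket $l_2$, as in Roytenberg--Weinstein. (Also, the identity you want is $[\phi,\D g]=\D\<\phi,\D g\>$, which is equivalent to $[\D g,\phi]=0$; as written your two versions contradict each other.)

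The genuine gap is in your strategy for \eqref{n=4}. You split $l_3=\tilde{H}+C$ and assert that the $C$-contribution cancels ``independently of $H$, exactly as in the untwisted case.'' But the untwisted cancellation of the $C$-part \emph{is} the statement that the Jacobiator equals $\D T$; here it equals $\tilde{H}+\D T$. Concretely, the terms $C(l_2(\phi_1,\phi_2),\phi_3,\phi_4)$ and $\phi_1\tri C(\phi_2,\phi_3,\phi_4)$ reduce, via \eqref{adInv} and $\phi\tri\D g=\tfrac12\D(\rho(\phi)g)$, to $\D$ of functions built from \emph{double} Dorfman brackets, and matching those functions forces an application of \eqref{HJacobi}, which injects terms of the form $\D\bigl(H(\phi_1,\phi_2,\phi_3,\phi_4)\bigr)$. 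These do not cancel within the $C$-part; paired with $\psi_0$ they are exactly the $\rho(\psi_0)\<H,\phi_1\wedge\dots\wedge\phi_4\>$ contributions needed, together with the $-\tfrac12\D\<\phi_i,\tilde{H}(\cdot,\cdot,\cdot)\>$ pieces of $\phi_i\tri\tilde{H}(\cdot,\cdot,\cdot)$, to complete the anchor- and bracket-insertion terms of the $\tilde{H}$-part into the full $\<\D H,\psi_0\wedge\phi_1\wedge\dots\wedge\phi_4\>$. Tracked separately, neither contribution vanishes; you must recombine them before invoking \eqref{DH}. With that correction the argument goes through and is, as far as one can tell, the calculation the paper has in mind.
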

The correction in the bracket $l_2$ and in the Jacobiator $l_3$ are analogous to Roytenberg \cite{Royt99} and therefore fit the Courant case.  
\begin{proof}  Straightforward but lengthy calculation.
\end{proof}

\subsection{$H$-twisted Courant--Dorfman algebras}
Let $\k$ be a commutative ring (with unit 1) that contains $\tfrac12$.  Analogously to Roytenberg \cite{Royt09} we define a strongly anchored almost Courant--Dorfman algebra as:
\begin{definition}  A strongly anchored almost Courant--Dorfman algebra $(R,\E,(.,.\>,\D_0,[.,.])$ is an $R$-module $\E$ together with a symmetric $R$-bilinearform $\<.,.\>\:\E\otimes_R\E\to R$ such that $\kappa\:\E\to\E^*:\psi\mapsto\<\phi,.\>$ is an isomorphism of $R$-modules, a derivation $\D_0\:R\to\E$, and a $\k$-bilinear (non-skewsymmetric) bracket $[.,.]:\E\otimes\E\to\E$ subject to the rules
\begin{align}
  [\psi,f\cdot\phi] &= \<\psi,\D_0 f\>\cdot\phi +f\cdot[\psi,\phi],  \\
  \<\psi,\D_0\<\phi,\phi\>\> &= 2\<[\psi,\phi],\phi\>,  \\
  [\phi,\phi] &= \tfrac12\D_0\<\phi,\phi\>,  \\
  \<[\psi,\phi],\D_0f\> &= \<\phi,\D_0\<\psi,\D_0f\>\> -\<\psi,\D_0\<\phi,\D_0f\>\>
\end{align}
\end{definition}
Examples are $\E$ the sections of a strongly anchored almost Courant algebroid $(E,\<.,.\>,\rho,[.,.])$.

These strongly anchored almost Courant--Dorfman algebras inherit a derivative of degree 1 on the exterior algebra $C^p(\E,\D_0):=\E^{\wedge p}\cap\ker\ins_{\D_0 R}$ as before:
\[\begin{split}  \<\D\alpha,\psi_0\wedge\dots\psi_p\> :=&\sum_{i=0}^p (-1)^i\<\psi_i,\D_0\<\alpha,\psi_0\wedge\dots\hat\psi_i\dots\psi_n\>\> \\
  &+\sum_{i<j} (-1)^{i+j}\<\alpha,[\psi_i,\psi_j]\wedge\psi_0\dots\hat\psi_i\dots\hat\psi_j\dots\psi_p\>
\end{split}\]  Note that in particular $(\D|R)=\D_0$.

Therefore we can define  $H$-twisted Courant--Dorfman algebras analogously to Roytenberg's definition.
\begin{definition}  An $H$-twisted Courant--Dorfman algebra $(R,\E,\<.,.\>,\D_0,[.,.],H)$ is an $R$-module $\E$ together with a symmetric $R$-bilinearform $\<.,.\>\:\E\otimes_R\E\to R$ such that $\kappa\:\E\otimes_R\E\to R:\psi\mapsto\<\psi,.\>$ is an isomorphism of $R$-modules, a derivative $\D_0\:R\to\E$, a $\k$-bilinear (non-skewsymmetric) bracket $[.,.]\:\E\otimes\E\to\E$, and a $C^4(E,\D_0)$-form $H$ subject to the rules
\begin{align}
  [\psi,f\cdot\phi] &= \<\psi,\D_0 f\>\cdot\phi +f\cdot[\psi,\phi],  \\
  \<\psi,\D_0\<\phi,\phi\>\> &= 2\<[\psi,\phi],\phi\>,  \\
  [\phi,\phi] &= \tfrac12\<\phi,\phi\>,  \\
  \tilde{H}(\phi,\psi_1,\psi_2) &= [\phi,[\psi_1,\psi_2]] -[[\phi,\psi_1],\psi_2] -[\psi_1,[\phi,\psi_2]], \\
  \D H &= 0, \\
  [\D_0f,\phi] &= 0,\\
  \<\D_0f,\D_0g\> &= 0
\end{align}  where $\phi,\psi_i\in\E$, $f,g\in R$ and $\D$ the extension of $\D_0$ as defined above.
\end{definition}
Examples are $\E$ the sections of an H-twisted Courant agebroid $(E,\<.,.\>,\rho,[.,.],H)$.

\section{Naive Cohomology}\label{s:naive}
\begin{proposition}  The covariant derivative $\D$ of Subsection~\ref{s:D} does not square to 0 in general, instead it fulfills for $H$-twisted Courant algebroids
\begin{align}
  \<\D^2f,\psi_0\wedge\psi_1\> &= 0, \\
  \<\D^2\phi, \psi_0\wedge\psi_1\> &= H(\phi,\psi_0,\psi_1), \\
  \D^2(\alpha\wedge\beta) &= (\D^2\alpha)\wedge\beta+ \alpha\wedge\D^2\beta
\end{align}  for $f\in\smooth(M)$, $\phi\in\Gamma(\ker\rho)$, $\alpha,\beta\in\Omega^\bullet_M(\ker\rho)$, and $\psi_i\in\Gamma(E)$.
\end{proposition}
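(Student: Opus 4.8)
The plan is to treat the three identities separately. The first follows by direct substitution: on functions the operator reduces to $\<\D f,\psi\>=\rho(\psi)[f]$, so a second application of the defining formula \eqref{D} gives
\[
  \<\D^2 f,\psi_0\wedge\psi_1\>=\bigl(\rho(\psi_0)\rho(\psi_1)-\rho(\psi_1)\rho(\psi_0)-\rho[\psi_0,\psi_1]\bigr)[f],
\]
and this vanishes by the Lemma that $\rho$ is a morphism of brackets. The third identity is purely formal: applying the graded Leibniz rule \eqref{Dleibn} to $\D(\D(\alpha\wedge\beta))$ and expanding each factor once more, the two mixed terms $(-1)^{|\alpha|}\D\alpha\wedge\D\beta$ and $(-1)^{|\alpha|+1}\D\alpha\wedge\D\beta$ cancel, leaving exactly $(\D^2\alpha)\wedge\beta+\alpha\wedge\D^2\beta$. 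Thus $\D^2$ is an even derivation of degree two, and combined with the first identity this shows $\D^2(f\cdot\alpha)=f\cdot\D^2\alpha$, so $\D^2$ is $\smooth(M)$-linear, hence tensorial; it is determined by its restriction to $\Gamma(\ker\rho)$, which is the content of the second identity.

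For the second identity I would compute $\<\D^2\phi,\psi_0\wedge\psi_1\wedge\psi_2\>$ for arbitrary $\psi_2\in\Gamma(E)$ by applying \eqref{D} twice to the one-form $\phi$; nondegeneracy of $\<.,.\>$ together with the splitting relation $\<\Ht(\phi,\psi_0,\psi_1),\psi_2\>=H(\phi,\psi_0,\psi_1,\psi_2)$ then recovers the claimed $\ker\rho$-valued expression. After substitution the terms fall into three groups: second-order anchor terms $\rho(\psi_a)\rho(\psi_b)\<\phi,\psi_c\>$, mixed anchor terms $\rho[\psi_a,\psi_b]\<\phi,\psi_c\>$ and $\rho(\psi_a)\<\phi,[\psi_b,\psi_c]\>$, and pure double-bracket terms $\<\phi,[[\psi_a,\psi_b],\psi_c]\>$. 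As in the first identity, the second-order anchor terms antisymmetrize into commutators $[\rho(\psi_a),\rho(\psi_b)]$ that cancel against the terms $\rho[\psi_a,\psi_b]\<\phi,\psi_c\>$ by the bracket-morphism Lemma.

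It remains to handle the first-order anchor terms together with the double-bracket terms. Here I would use that $\phi\in\Gamma(\ker\rho)$, so $\rho(\phi)=0$ and $\<\phi,\D g\>=\rho(\phi)[g]=0$ for $g\in\smooth(M)$; the polarized form of the ad-invariance \eqref{adInv} lets me rewrite each term $\rho(\psi_a)\<\phi,[\psi_b,\psi_c]\>$ as double brackets paired with $\phi$, and the polarized non-skewness \eqref{nSkew}, in the form $[\psi_i,\psi_j]+[\psi_j,\psi_i]=\D\<\psi_i,\psi_j\>$, removes the symmetric part of the bracket when paired against $\phi$. Collecting everything, the residue is $\<\phi,\cdot\>$ of the Jacobiator of $\psi_0,\psi_1,\psi_2$, which the twisted Jacobi axiom \eqref{HJacobi} replaces by $\Ht$, matching $H(\phi,\psi_0,\psi_1,\psi_2)$ and hence yielding $\<\D^2\phi,\psi_0\wedge\psi_1\>=H(\phi,\psi_0,\psi_1)$.

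I expect the second identity to be the main obstacle, for two reasons. First, the bracket is not skew-symmetric, so the symmetric remainders $\D\<\psi_i,\psi_j\>$ must be carried along explicitly and seen to drop out against $\phi\in\ker\rho$ rather than silently discarded as in the Lie-algebroid case. Second, the identity only emerges after the metric-compatibility axioms are used to migrate $\phi$ from the pairing slot into bracket position, and the signs and the $\cycl$ pattern must be tracked precisely so that exactly the Jacobiator---and no spurious anchor or $\D\<.,.\>$ remainders---survives. The first and third identities, by contrast, are immediate from the bracket-morphism Lemma and the Leibniz rule \eqref{Dleibn}, respectively.
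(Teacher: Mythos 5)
Your proposal is correct and follows essentially the same route as the paper's (very terse) proof: the first identity from the bracket--morphism lemma, the third from the graded Leibniz rule \eqref{Dleibn}, and the second by expanding \eqref{D} twice, cancelling the second-order anchor terms against $\rho[\psi_a,\psi_b]$ via the morphism property, and reducing the surviving double-bracket terms to the twisted Jacobi identity \eqref{HJacobi}, with \eqref{nSkew} and $\rho(\phi)=0$ disposing of the symmetric remainders. The only minor remark is that the mixed terms $\rho(\psi_a)\<\phi,[\psi_b,\psi_c]\>$ already cancel in pairs between the two sums of \eqref{D}, so the detour through the polarized ad-invariance \eqref{adInv} is harmless but not actually needed.
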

\begin{proof}  The proof is analogous to the one for $H$-twisted Lie algebroids, namely the first statement follows from the morphism property of $\rho$, the second statement is a reformulation of the Leibniz rule, and the last statement follows from the graded Leibniz rule \eqref{Dleibn}.
\end{proof}

\begin{theorem}[Naive cohomology]\label{thm:naive}  The cochains
\begin{align}  C^p(E,\rho,H) &:= \Omega^p(\ker\rho)\cap \ker\tilde{H}
\intertext{together with the derivative}
  d\:C^p(E,\rho,H) &\to C^{p+1}(E,\rho,H) : \alpha \mapsto \D\alpha
\end{align}  form a cochain complex.
\end{theorem}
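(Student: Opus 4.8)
The plan is to deduce the statement from a single structural fact supplied by the preceding proposition, namely that the square of $\D$ is exactly the contraction by the twist $H$. That proposition exhibits $\D^2$ as a degree-two derivation of $\Omega^\bullet(\ker\rho)$ which annihilates $\Omega^0(\ker\rho)=\smooth(M)$ and sends a section $\phi\in\Omega^1(\ker\rho)=\Gamma(\ker\rho)$ to the three-form $\iota_\phi H$ characterised by $\<\D^2\phi,\psi_0\wedge\psi_1\>=\Ht(\phi,\psi_0,\psi_1)$. But these three properties — vanishing on functions, agreeing with $\iota_\bullet H$ on $\Gamma(\ker\rho)$, and the graded Leibniz rule — are exactly those defining the operator $\Ht$ of inserting the four-form $H$. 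Hence $\D^2=\Ht$ as operators on $\Omega^\bullet(\ker\rho)$, and in particular $\ker\Ht=\ker\D^2$, so that $C^p(E,\rho,H)=\Omega^p(\ker\rho)\cap\ker\D^2$.

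With this identification the two defining properties of a cochain complex become nearly formal. First I would check that $d=\D$ maps $C^p$ into $C^{p+1}$. By Proposition~\ref{p:D} the form $\D\alpha$ again lies in $\Omega^{p+1}(\ker\rho)$, so it remains only to see that it is $H$-annihilated. This is where the single genuine step enters: using the trivial operator identity $\D\circ\D^2=\D^3=\D^2\circ\D$, for $\alpha\in C^p$ (so $\D^2\alpha=0$) we get $\D^2(\D\alpha)=\D(\D^2\alpha)=\D(0)=0$, hence $\D\alpha\in\ker\Ht$ and therefore $\D\alpha\in C^{p+1}$. In other words $\D$ commutes with its own square and so preserves the subspace of $H$-annihilated cochains. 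Second, $d^2=0$ is immediate: for $\alpha\in C^p$ one has $d^2\alpha=\D^2\alpha=0$ directly from $C^p\subseteq\ker\Ht=\ker\D^2$.

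The only non-formal ingredient is the preceding proposition, which measures the failure of the naive differential to square to zero precisely by the twist $H$; once that is in hand, the complex property reduces to the observation that any operator commutes with its own powers, so that passing to $\ker\D^2$ simultaneously kills $d^2$ and is respected by $d$. I expect the point requiring the most care to be the stability $\D(C^p)\subseteq C^{p+1}$ rather than the vanishing of $d^2$: one must be sure that restricting $\D$ to the $H$-annihilated cochains does not produce forms outside $\ker\Ht$, and it is exactly the identity $\D^3=\D\circ\D^2$ together with $\D^2=\Ht$ that guarantees this.
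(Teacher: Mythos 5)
Your overall strategy is close to the paper's: both arguments reduce the theorem to the interaction between $\D$ and the insertion operator $\tilde H$, and both obtain $d^2=0$ on $C^p$ from the preceding proposition, which measures $\D^2$ by $H$. The divergence --- and the gap --- is in the stability step $\D(C^p)\subseteq C^{p+1}$. The paper proves it from the commutator identity $[\D,\tilde H]=\widetilde{\D H}$ together with the axiom \eqref{DH}, $\D H=0$. You instead promote the three facts of the preceding proposition to the operator identity $\D^2=\tilde H$ on all of $\Omega^\bullet_M(\ker\rho)$ and then conclude $[\D,\tilde H]=[\D,\D^2]=0$ for free, never using \eqref{DH}. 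That promotion is not justified: the proposition pins down $\D^2$ only on $\Omega^0$, on $\Omega^1$, and on wedge products of such elements, whereas $\Omega^p_M(\ker\rho)$ is defined as the kernel of $\tilde\rho$ inside $\Gamma(\wedge^pE)$ and, when the rank of $\ker\rho$ jumps --- exactly the singular situation the paper is built to accommodate, hence the Courant--Dorfman detour --- it need not be spanned by products of sections of $\ker\rho$. A derivation is then not determined by its values in degrees $\le 1$, so $\ker\tilde H=\ker\D^2$ does not follow.

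There is also a decisive warning sign that your route cannot be a complete substitute for the paper's. If $\D^2=\tilde H$ held as operators everywhere and $[\D,\tilde H]=\widetilde{\D H}$ (the identity the paper does use), your argument would yield $\widetilde{\D H}=[\D,\D^2]=0$, i.e.\ axiom \eqref{DH} would be a consequence of the remaining axioms. That is inconsistent with Example 1 of the paper, where $\D H=0$ becomes the nontrivial nonlinear PDE $0=\D_0\tilde B^2+\tilde B\D_0B+\tilde B^3$ and is asserted to be automatic only when $\rk\ker\rho\le4$, for degree reasons. So the hypothesis $\D H=0$ is genuinely needed, and the correct repair of your stability step is the paper's: establish the commutator identity $[\D,\tilde H]=\widetilde{\D H}$ directly and invoke \eqref{DH} to make it vanish. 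Your observation that $d^2=0$ then holds on $\ker\tilde H$ because $\D^2$ acts there as insertion of $H$ is fine and agrees with what the paper leaves implicit.
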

\begin{proof}  It remains to check that $\D$ maps $\tilde{H}$-closed forms to $\tilde{H}$-closed forms.  This follows from the property
 \[ [\D,\tilde{H}]= \widetilde{\D H} = 0
 \] due to the axiom \eqref{DH}.
\end{proof}

The corresponding notion of naive cochains for Courant--Dorfman algebras is
\[  C^p(\E,\D_0,H):= \ker \tilde{H}|\E^{\wedge p}\cap\ker\ins_{\D_0R}.
\]

\section{Dirac Structures and $H$-twisted Lie Algebroids}\label{s:Dirac}
Given an $H$-twisted Courant algebroid (with bilinearform) of split signature, we define a Dirac structure in the usual way.
\begin{definition}  Given an $H$-twisted Courant algebroid $(E,\<.,.\>,[.,.],\rho,H)$, we define
\begin{enumerate}\item  an \emph{isotropic subbundle} $L\subset E$ as a vector subbundle over $M$ such that $\<L,L\>\equiv0$.  If the bilinearform is of split signature, we can consider maximal isotropic subbundles with respect to inclusion and call them Lagrangean subbundles.
\item an \emph{integrable subbundle} $L\subset E$ when the bracket closes on the sections of $L$, \textit{i.e.}\ $[\Gamma(L),\Gamma(L)]\subset\Gamma(L)$.
\item a \emph{Dirac structure} as a maximal isotropic integrable subbundle in an $H$-twisted Courant algebroid of split signature.
\end{enumerate}
\end{definition}

Compare this with the definition of $H$-twisted Lie algebroids (taken from \cite{Gru10}):
\begin{definition}\label{d:HLie}  An H-twisted Lie algebroid is a vector bundle $E\to M$ together with a bundle map $\rho\:E\to TM$ (called the anchor), a section $H\in\Esforms[3]{\ker\rho}$, and a skew-symmetric bracket $[.,.]\:\Gamma(E)\wedge\Gamma(E)\to\Gamma(E)$ subject to the axioms
\begin{align}
 [\phi,[\psi_1,\psi_2]] &= [[\phi,\psi_1],\psi_2] +[\psi_1,[\phi,\psi_2]] +H(\phi,\psi_1,\psi_2)  \label{Jacobi} \\
 [\phi, f\cdot\psi] &= \rho(\phi)[f]\cdot\psi +f\cdot[\phi,\psi] \label{LLeibniz} \\
 \uD H &= 0 \label{Hclosed}
\end{align}  where $f\in\smooth(M)$, $\phi,\psi,\psi_i\in\Gamma(E)$ and $\uD$ is the one defined for anchored almost Lie algebroids analogous to \eqref{D}, but $\rho$ replaced by
 \[ \conn_\psi v :=[\psi,v]
 \] for every $\psi\in\Gamma(E)$ and $v\in\Gamma(\ker\rho)$ which is an $E$-connection on $\ker\rho$.
\end{definition}

We have the immediate consequence.
\begin{proposition}  Given an $H$-twisted Courant algebroid $(E,H)$ of split signature.  Then every Dirac structure $L\subset E$ is an $H$-twisted Lie algebroid.  In particular the twist $\tilde{H}$ induces a $\uD$-closed $L$-three-form with values in $\ker\rho|L$.
\end{proposition}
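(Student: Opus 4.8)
The plan is to transport each defining axiom of an $H$-twisted Lie algebroid (Definition~\ref{d:HLie}) to the restricted data $(L,\rho|_L,[.,.]|_L,\tilde H|_L)$, the only genuinely delicate point being the closedness condition \eqref{Hclosed}. First I would dispose of the structural axioms. Because $L$ is isotropic, $\<\psi,\psi\>\equiv0$ for $\psi\in\Gamma(L)$, so \eqref{nSkew} gives $[\psi,\psi]=\tfrac12\D\<\psi,\psi\>=0$ and, by polarization, the restricted bracket is skew-symmetric -- as a Lie-algebroid bracket must be. The Leibniz axiom \eqref{Leibniz} restricts verbatim to \eqref{LLeibniz}, its right-hand side remaining in $\Gamma(L)$ by integrability, and $\rho|_L$ is a morphism of brackets because $\rho$ is. For the twist, the right-hand side of \eqref{HJacobi} lies in $\Gamma(L)$ by integrability whenever $\phi,\psi_1,\psi_2\in\Gamma(L)$, while $\tilde H$ takes values in $\ker\rho$ by construction; hence $\tilde H|_L$ has values in $\ker\rho\cap L=\ker(\rho|_L)$, i.e.\ it is a genuine $L$-three-form valued in the kernel of the restricted anchor, and integrability together with the morphism property shows that $\conn_\psi v=[\psi,v]$ preserves $\ker\rho\cap L$, so it is an honest $L$-connection there. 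Rearranging \eqref{HJacobi} for the now skew-symmetric bracket is precisely the twisted Jacobi identity \eqref{Jacobi}.

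The essential point is \eqref{Hclosed}. Here I would use that $L$ is \emph{maximal} isotropic in split signature, so $L=L^\perp$ and a section $v\in\Gamma(L)$ vanishes if and only if $\<v,\chi\>=0$ for all $\chi\in\Gamma(E)$. Since $\bigl(\uD(\tilde H|_L)\bigr)(\psi_0,\ldots,\psi_3)$ lies in $\Gamma(L)$ by the previous paragraph, it suffices to prove the pairing identity
\[ \bigl\<\,\bigl(\uD(\tilde H|_L)\bigr)(\psi_0,\ldots,\psi_3),\,\chi\,\bigr\> = \<\D H,\,\psi_0\wedge\cdots\wedge\psi_3\wedge\chi\> \]
for all $\psi_i\in\Gamma(L)$ and all $\chi\in\Gamma(E)$; the right-hand side then vanishes by the axiom \eqref{DH}, and maximality of $L$ upgrades this to $\uD(\tilde H|_L)=0$.

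To verify the pairing identity I would expand the Lie-algebroid operator $\uD$ of Definition~\ref{d:HLie} with $\conn_\psi v=[\psi,v]$ and compare it term by term with the Courant operator $\D$ of \eqref{D} applied to $H$ and paired with $\psi_0\wedge\cdots\wedge\psi_3\wedge\chi$. The connection terms $\<[\psi_i,v_i],\chi\>$, where $v_i=\tilde H(\psi_0,\ldots,\hat\psi_i,\ldots,\psi_3)$, I would open up by the polarized form of \eqref{adInv}, namely $\<[\psi_i,v_i],\chi\>=\rho(\psi_i)\<v_i,\chi\>-\<v_i,[\psi_i,\chi]\>$: the first summand reproduces the anchor terms of $\D H$ for the indices $0,\ldots,3$ (using the splitting convention $\<\tilde H(\psi_a,\psi_b,\psi_c),\psi_d\>=\<H,\psi_a\wedge\psi_b\wedge\psi_c\wedge\psi_d\>$), while the second reproduces exactly the bracket terms of $\D H$ that pair $\chi$ with some $\psi_i$. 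The bracket terms of $\uD$ match the remaining bracket terms of $\D H$. The only term of $\D H$ with no partner in $\uD$ is the anchor term $\rho(\chi)\<H,\psi_0\wedge\cdots\wedge\psi_3\>$; but $\<H,\psi_0\wedge\cdots\wedge\psi_3\>=\<\tilde H(\psi_0,\psi_1,\psi_2),\psi_3\>=0$ by isotropy of $L$, so it drops out and the two expansions agree. I expect the main obstacle to be exactly this index- and sign-bookkeeping in lining up the two formulas; conceptually all the work is done by polarized ad-invariance, by the isotropy that annihilates the orphan anchor term, and by maximal isotropy that turns the pairing identity into the desired vanishing.
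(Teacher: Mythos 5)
Your argument is correct, and it is worth noting that the paper itself offers no proof at all here -- the proposition is announced as an ``immediate consequence'' -- so your write-up is supplying exactly the details the author suppresses. The structural part (skew-symmetry of the restricted bracket from \eqref{nSkew} and isotropy, the Leibniz rule, the Jacobiator landing in $\Gamma(L)\cap\ker\rho$ by integrability, and the rearrangement of \eqref{HJacobi} into \eqref{Jacobi}) is routine and you handle it correctly. The genuine content is the closedness \eqref{Hclosed}, and your pairing identity
$\<\,(\uD(\tilde H|_L))(\psi_0,\dots,\psi_3),\chi\> = \<\D H,\psi_0\wedge\dots\wedge\psi_3\wedge\chi\>$
is the right mechanism: polarized ad-invariance \eqref{adInv} converts the connection terms $\<[\psi_i,v_i],\chi\>$ into the anchor terms of $\D H$ minus the mixed bracket terms $\<v_i,[\psi_i,\chi]\>$, the interior bracket terms match directly, and the single unmatched term $\rho(\chi)\<H,\psi_0\wedge\dots\wedge\psi_3\>$ dies because $H$ restricted to $L^{\wedge 4}$ vanishes (its splitting is the Jacobiator of sections of $L$, which lies in $\Gamma(L)$, paired against a section of $L$). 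Two minor remarks: first, the final step needs only nondegeneracy of $\<.,.\>$ on $E$, not maximality of $L$ (maximality is part of the hypothesis of being Dirac but does no work in upgrading the pairing identity to $\uD(\tilde H|_L)=0$); second, since both \eqref{D} and the polarized form of \eqref{adInv} involve the same non-skew Dorfman bracket in the slot $[\psi_i,\chi]$, no correction from the symmetric part of the bracket appears, which is why the term-by-term matching closes without further argument.
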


\section*{Acknowledgments}
Research on this paper was conducted during the stay at Sun Yat-sen University and partially supported by NSFC(10631050 and 10825105) and NKBRPC(2006CB805905).  The paper was finished at Northwestern Polytechnical University.  I am grateful to Z.-J. Liu for comments on an earlier version of this paper.


\end{document}